\newtheorem{lem}{Lemma}
\newtheorem{thm}{Theorem}
\newtheorem{cor}{Corallary}
\title{List Coloring of some Cayley graphs using Kernel perfections}
\author{Prajnanaswaroopa S\\
sntrm4@rediffmail.com}
\date{September 2023}
\begin{document}

\maketitle

\section*{Abstract}
In this paper, we try to determine exact or bounds on the choosability, or list chromatic numbers of some Cayley graphs, typically some Unitary Cayley graphs and Cayley graphs on Dihedral groups.
\section*{Introduction}
Kernels in digraphs were used to study games on graphs. Later, they were used to study list colorings. Specifically,  \cite{KER1}, \cite{KER2} gives some proper introdution and applications of the kernels and kernel-perfect orientations. In this paper, we discuss the list coloring of some Cayley graphs using kernel perfect orientations, which were first applied in the list coloring setting by Galvin to prove the List Coloring conjecture restricted to bipartite graphs \cite{GAL}. 

A kernel is an independent set $S$ in a digraph $G$ such that all vertices in $G/S$ have an edge directed towards $S$. Thus, it can be said to be a directed independent dominating set of the digraph $G$. A digraph $G$ is said to be kernel-perfect if every induced subgraph of $G$ has a kernel. A kernel-perfect orientation of an undirected graph $G$ is orienting the edges of $G$ such that it becomes a kernel-perfect digraph.
\section*{Circulant graphs}
A Cayley graph is an interesting graph associated with an algebraic structure. Consider a group/groupoid/magma $\Gamma$, with a symmetric generating set $S$ (A set such that $s\in S$ iff $s^{-1}\in S$) without the identity of $\Gamma$. The graph having vertices as all the elements of $\Gamma$, with edges determined by the symmetric generating set $S$, such that two elements of the group $x,y$ are adjacent iff $x=ys$ is called a Cayley graph.

A Cayley graph on $\Gamma$ when $\Gamma$ is a cyclic group is called a Circulant graph. Among circulant graphs, if the generating set consists of all the elements coprime to the order of the graph, the Cayley graph is said to be a Unitary Cayley graph. A circulant graph is a $k$-th power of an $n$ cycle if the generating set $S$ is of the form $\{1,2,\ldots,k,n-k,\ldots,n-2,n-1\}$.

It is well known that the Powers of Cycles are chromatic choosable \cite{WOOD}. Here we try to obtain similar results for unitary Cayley graphs. The object will be to find a kernel-perfect orientation of some unitary Cayley graphs. The following lemma follows from the work of \cite{BOR}, \cite{BOR1}. 

\begin{lem}
If we can orient a perfect graph $G$ in a kernel solvable manner with maximum outdegree $k$, then $ch(G)\le k+1$. 
\end{lem}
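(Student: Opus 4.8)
The plan is to prove the sharper, ``local'' version of the statement that the kernel method actually delivers, and then specialize. Concretely, I would fix the given kernel-solvable (i.e.\ kernel-perfect) orientation $D$ of $G$ and show the following: for \emph{any} list assignment $L$ satisfying $|L(v)| \ge d^+_D(v)+1$ at every vertex $v$, the graph $G$ admits a proper $L$-coloring. Since the maximum outdegree of $D$ is $k$, the bound $|L(v)| \ge k+1$ implies $|L(v)| \ge d^+_D(v)+1$ for all $v$, so this local statement immediately gives $ch(G)\le k+1$. The whole point is therefore to establish the local statement, which I would do by induction on $\sum_v |L(v)|$ (equivalently on the number of vertices).

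For the induction step, first I would pick any color $c$ occurring in some list and set $V_c = \{v : c\in L(v)\}$, which is nonempty by choice of $c$. Because $D$ is kernel-perfect, the induced subdigraph $D[V_c]$ has a kernel $K$, and $K$ is nonempty since $V_c$ is. I would color every vertex of $K$ with $c$; this is a legitimate partial coloring because a kernel is independent in $G$. Then I delete $K$ and delete $c$ from the list of every surviving vertex that was adjacent to $K$. The crucial bookkeeping is that the invariant $|L(\cdot)| \ge d^+(\cdot)+1$ survives in $D-K$: for $v\in V_c\setminus K$ the kernel property provides an arc from $v$ into $K$, so removing $K$ lowers $d^+(v)$ by at least one, which exactly offsets the loss of the color $c$ from $L(v)$; for $v\notin V_c$ the list is untouched and the outdegree can only drop. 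Since $D-K$ is again an induced subdigraph of a kernel-perfect digraph, the induction hypothesis applies and finishes the coloring.

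The hard part will be precisely this invariant-preservation in the induction step: one must check that removing the monochromatic kernel decreases each affected vertex's outdegree by at least as much as its list shrinks. This works only because a kernel carries two properties simultaneously — independence, so that a single color legally covers all of $K$, and domination by in-arcs, so that every vertex outside $K$ forfeits at least one out-neighbor to pay for the color it loses. I would emphasize that perfectness of $G$ is not used directly in this implication; it enters only through the hypothesis that a kernel-solvable orientation of $G$ exists at all, which is where the cited results of Borodin et al.\ do the heavy lifting.
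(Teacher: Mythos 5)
Your induction argument itself is correct: it is the standard kernel/list-coloring argument (the one Galvin's proof rests on), and it cleanly establishes the implication ``if $D$ is a \emph{kernel-perfect} orientation with $d^+_D(v)\le k$ for all $v$, then $ch(G)\le k+1$,'' even in the sharper local form $|L(v)|\ge d^+_D(v)+1$. The bookkeeping you highlight — independence of the kernel pays for reusing the color, domination by in-arcs pays for the lost list entry — is exactly right, and in fact you prove in detail a step that the paper merely asserts.

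The genuine gap is your very first move: the parenthetical ``kernel-solvable (i.e.\ kernel-perfect)'' assumes away what is actually the content of this lemma. In this paper, following Boros and Gurvich \cite{BOR}, a kernel-solvable orientation is \emph{not} by definition kernel-perfect: it is an orientation in which every clique has an absorbing (all-dominating) vertex, and that is the only property verified when the lemma is later applied (e.g.\ in Theorem 2 the orientation is checked clique by clique, never on arbitrary induced subdigraphs). A clique-acyclic orientation has no a priori reason to possess kernels in all induced subdigraphs, so the hypothesis your induction consumes is unjustified as written. The bridge is precisely the Boros--Gurvich theorem that perfect graphs are kernel solvable: every clique-acyclic orientation of a perfect graph has a kernel, and since induced subgraphs of perfect graphs are perfect and inherit clique-acyclicity, such an orientation is in fact kernel-perfect. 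This is where perfectness does real work, so your closing remark that perfectness ``is not used directly'' and enters only through the existence of the orientation is backwards. For comparison, the paper's own proof is aimed entirely at this conversion step (via a dubious claim about directed cycles of length at least $5$) and then cites the choosability implication; you did the opposite — proved the choosability implication rigorously and elided the conversion. A complete proof needs both halves: Boros--Gurvich to upgrade kernel-solvable to kernel-perfect, then your induction.
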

\begin{proof}
Let $D$ be the kernel solvable orientation of the graph $G$. Since $G$ has no odd cycles of length $\ge5$, therefore, in its kernel solvable orientation $D$, we will not have a directed cycle of length $\ge5$, which implies that once we have a kernel solvable orientation, we also have a kernel perfect orientation with maximum outdegree $l$, thus implying $ch(G)\le l$. 
\end{proof}

\begin{thm}
The Unitary Cayley Graphs $G$ of order $n$, where $n$ is odd has at most two prime divisors satisfies $ch(G)=k$, where $k$ is the least prime dividing $n$.
\end{thm}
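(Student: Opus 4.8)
The plan is to prove the two inequalities $ch(G)\ge k$ and $ch(G)\le k$ separately, where $k=p$ is the least prime dividing $n$. The lower bound is immediate: $G$ contains $K_k$ as a subgraph, so $\omega(G)\ge k$ and therefore $ch(G)\ge\chi(G)\ge\omega(G)\ge k$. For the upper bound I would appeal to Lemma 1: it is enough to exhibit a kernel-solvable (kernel-perfect) orientation $D$ of $G$ with maximum outdegree $k-1$, for then Lemma 1 gives $ch(G)\le(k-1)+1=k$. Thus the whole theorem reduces to constructing one good orientation.

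First I would unwind the arithmetic structure. Write $n=p^aq^b$ with $p<q$ odd primes (the one-prime case $n=p^a$ being the simpler instance, handled the same way with the second factor absent). The Chinese Remainder Theorem gives $\mathbb{Z}_n\cong\mathbb{Z}_{p^a}\times\mathbb{Z}_{q^b}$ and sends units to pairs of units, so $G\cong X_{p^a}\otimes X_{q^b}$, a tensor product of smaller unitary Cayley graphs. Each factor is complete multipartite, $X_{p^a}=K_{p\times p^{a-1}}$ and $X_{q^b}=K_{q\times q^{b-1}}$, since two elements are adjacent exactly when they differ modulo the underlying prime. This makes $\chi(G)=\min(p,q)=p=k$ transparent and, because complete multipartite graphs are perfect, puts us in position to check the perfection hypothesis of Lemma 1 on $G$.

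Next I would construct the orientation by lifting a good orientation of the base clique. On $K_p$ take a transitive-tournament (acyclic) orientation: it has a unique sink, is kernel-perfect, and has maximum outdegree $p-1$. The aim is to transport this orientation to $K_{p\times p^{a-1}}$ and then across the tensor product so that (i) the maximum outdegree never exceeds $k-1=p-1$, and (ii) every induced subdigraph retains a kernel. For (ii) I would use that $G$ has no induced odd cycle of length at least five, so — as in the proof of Lemma 1 — a kernel-solvable orientation produces no long odd directed cycle, and the kernel lemma then yields an $L$-coloring from any lists of size $k$.

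The main obstacle is achieving (i) and (ii) simultaneously on the prime-power factor when $a\ge2$. Between any two parts of $K_{p\times p^{a-1}}$ lies a large complete bipartite graph $K_{m,m}$ with $m=p^{a-1}$, and any balanced orientation of it forces outdegree of order $\tfrac{(p-1)m}{2}$, vastly exceeding $p-1$; one must instead push almost every edge inward while still guaranteeing that each induced subgraph is dominated by some independent part. This is exactly where the oddness of $n$ has to be exploited, since the even case would reintroduce blocks such as $K_{3,3}$ which are known not to be chromatic-choosable. I expect the genuine difficulty — and the main risk to the argument — to be certifying that a single orientation of outdegree only $p-1$ can be kernel-perfect on every induced subgraph at once.
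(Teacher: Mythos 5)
Your reduction (clique lower bound, plus Lemma 1 for the upper bound via an orientation of maximum outdegree $k-1$) is exactly the paper's strategy, and your structural facts are right: under CRT, $G\cong X_{p^a}\otimes X_{q^b}$ and $X_{p^a}=K_{p\times p^{a-1}}$. But the proposal stops short of a proof at the only step that carries weight: the kernel-perfect orientation of maximum outdegree $k-1$ is never constructed, only hoped for, and the obstacle you flag at the end is not a ``risk'' --- it is fatal. In any orientation of any graph the outdegrees sum to $|E|$, so some vertex has outdegree at least $|E|/|V|$, which for the $\phi(n)$-regular graph $X_n$ equals $\phi(n)/2$. Unless $n$ is prime this exceeds $p-1$: for $n=p^a$ with $a\ge 2$ we get $\phi(n)/2\ge p(p-1)/2>p-1$ since $p$ is odd, and for $n$ with two odd prime divisors we get $\phi(n)/2\ge (p-1)(q-1)/2\ge 2(p-1)>p-1$. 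Hence \emph{no} orientation of $G$, kernel-perfect or otherwise, has maximum outdegree $p-1$, so Lemma 1 can never deliver $ch(G)\le p$ along this route; ``pushing almost every edge inward'' cannot help, because an edge pushed inward at one endpoint is outward at the other.

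Worse, your own identification $X_9=K_{3\times 3}$ already refutes the statement being proved: by Kierstead's theorem on complete multipartite graphs with parts of size three, $ch(K_{3\times 3})=\lceil (4\cdot 3-1)/3\rceil=4>3=k$, and $n=9$ is odd with at most two prime divisors. So the theorem as stated is false, and no completion of your argument (or any other) can exist; at best the claim could survive for squarefree $n$, where the prime-power blocks you worry about disappear. For comparison, the paper's own proof follows the same outline but ``constructs'' the orientation by orienting only the Hamiltonian cycles generated by $1,2,\ldots,k$ anticlockwise; that orients only the edges whose differences lie in $\{\pm 1,\ldots,\pm(k-1)\}$ and silently leaves the other $\phi(n)-2(k-1)$ edges at each vertex unoriented, so it evades rather than overcomes the same counting obstruction. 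In short: your proposal is honest about where the difficulty lies, but the difficulty is real, unresolvable, and shared by the paper.
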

\begin{proof}
First, from \cite{KLOT}, the graphs $G$ are perfect. We create a digraph $D$ from $G$ as follows. We orient all the Hamiltonian cycles generated by the elements $1,2,\ldots,k$, where $k$ is the least prime dividing $n$ in the anticlockwise sense. We can observe that in this orientation, all the cliques (which are precisely the sequence of any $k$ consecutive vertices) are given a kernel, that is, an all-absorbing vertex. Thus, $D$ is kernel solvable with maximum outdegree $k-1$.
Hence, it directly follows from the previous lemma that $ch(G)\le (k-1)+1=k$. As $G$ contains $k$ clique, therefore, $ch(G)=k.$
\end{proof}
\begin{thm}
The complement of Unitary Cayley graphs $G$ of odd order $n$, where $n$ has at most two prime divisors satisfies $ch(G)=\chi(G)$
\end{thm}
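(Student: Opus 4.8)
The plan is to recognise the complement as a line graph of a bipartite multigraph and then feed Galvin's kernel construction into Lemma~1. Write $G$ for the unitary Cayley graph in question and $\overline{G}$ for its complement (the graph of the theorem), and let $n=p^aq^b$ with $p<q$ the two prime divisors; set $m=p^{a-1}q^{b-1}=n/(pq)$ (the single-prime case $n=p^a$ is degenerate and is dealt with at the end). Since $x,y$ are adjacent in $G$ iff $\gcd(x-y,n)=1$, they are adjacent in $\overline{G}$ iff $p\mid(x-y)$ or $q\mid(x-y)$. Using the CRT isomorphism $\mathbb{Z}_n\cong\mathbb{Z}_{p^a}\times\mathbb{Z}_{q^b}$, I would first record the structural fact that this makes $\overline{G}$ the $K_m$-blow-up of the rook's graph $K_p\,\square\,K_q$; equivalently $\overline{G}\cong (K_p\,\square\,K_q)[K_m]$, the lexicographic product, where each of the $pq$ cosets of $\langle pq\rangle$ is a clique $K_m$ and two cliques are completely joined iff they share the same residue mod $p$ or mod $q$.

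The key observation is that $K_p\,\square\,K_q$ is the line graph $L(K_{p,q})$, so its $K_m$-blow-up is the line graph $L(K_{p,q}^{(m)})$ of the bipartite multigraph $H:=K_{p,q}^{(m)}$ obtained from $K_{p,q}$ by giving every edge multiplicity $m$. Thus $\overline{G}\cong L(H)$ with $\Delta(H)=qm=n/p$ (the $p$-side vertices have degree $qm$, the $q$-side degree $pm$). Now $G$ is perfect by \cite{KLOT}, so $\overline{G}$ is perfect by the weak perfect graph theorem (and independently because line graphs of bipartite graphs are perfect), which lets me invoke Lemma~1. A maximum clique of $\overline{G}$ is a coset of $\langle p\rangle$, of size $n/p$, so $\chi(\overline{G})=\omega(\overline{G})=n/p=qm$. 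Since always $ch(\overline{G})\ge\chi(\overline{G})$, it remains to establish the upper bound $ch(\overline{G})\le n/p$.

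For the upper bound I would reconstruct Galvin's orientation \cite{GAL}. By K\"onig's edge-colouring theorem, $\chi'(H)=\Delta(H)=qm$, so I fix a proper edge-colouring $c$ of $H$ with colours $1,\dots,qm$, which is an optimal proper vertex-colouring of $\overline{G}=L(H)$. Note that every clique of $L(H)$ lies in the star of edges at a single vertex of $H$ (as $H$ is bipartite, its only triangles in the line graph come from parallel edges, which share both endpoints), and on such a star $c$ is injective. I then orient: for adjacent edges $e,f$ of $H$ meeting at $w$, orient toward the larger colour if $w$ is on the $p$-side and toward the smaller colour if $w$ is on the $q$-side. Counting, the out-degree of a vertex $e=uv$ ($u$ on the $p$-side, $v$ on the $q$-side) is at most (number of edges at $u$ of colour $>c(e)$) $+$ (number of edges at $v$ of colour $<c(e)$) $\le(\Delta-c(e))+(c(e)-1)=\Delta-1=qm-1$. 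Granting that this orientation is kernel-perfect, Lemma~1 applies with $k=qm-1$ and gives $ch(\overline{G})\le(qm-1)+1=qm=\chi(\overline{G})$. In the degenerate case $n=p^a$, $\overline{G}$ is a disjoint union of cliques $K_{n/p}$, which are trivially chromatic-choosable, so the conclusion holds there as well.

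The step I expect to be the main obstacle is verifying that the Galvin orientation is kernel-perfect, i.e.\ that every induced subdigraph has a kernel; this is proved by a stable-matching (Gale--Shapley) argument in which the bipartition of $H$ supplies the two preference orders, but one must check carefully that it is unaffected by the parallel edges created by the $K_m$-blow-up and that it survives the induced-subgraph quantifier. A secondary point I would emphasise is exactly why the hypothesis of at most two prime divisors is essential: with three primes the complement becomes a blow-up of the three-dimensional rook's graph $K_p\,\square\,K_q\,\square\,K_r$, which is not a line graph of a bipartite graph, so Galvin's method, and hence this route through Lemma~1, no longer applies directly.
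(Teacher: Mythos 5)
Your proposal is correct, and it takes a genuinely different route from the paper. The paper stays entirely inside the circulant picture: it gets perfectness of $\overline{G}$ from \cite{KLOT} plus closure of perfectness under complementation, identifies the maximum clique as a coset of $\langle p\rangle$ (size $n/p$), and then orients ``the Hamiltonian cycles generated by $p,2p,\ldots,n-p$'' anticlockwise, asserting by inspection that this is kernel solvable with constant out-degree $n/p-1$, and concludes via Lemma~1. You instead transport the problem to edge colouring: via CRT you identify $\overline{G}$ with the blow-up $(K_p\,\square\,K_q)[K_m]\cong L(K_{p,q}^{(m)})$ and run Galvin's colour-based orientation on the bipartite multigraph $K_{p,q}^{(m)}$. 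Your route buys rigor at exactly the point where the paper is weakest. First, the step you flag as the ``main obstacle''---kernel-perfectness of the orientation, including the effect of parallel edges and the induced-subgraph quantifier---is precisely the content of \cite{GAL}, which is stated and proved for bipartite \emph{multigraphs}; you may cite it rather than redo the stable-matching argument (the only wrinkle is that your two rules give parallel edges arcs in both directions, i.e.\ digons, but the out-degree bound $\Delta-1$ and kernel-perfectness survive this, as in Galvin's own treatment). Second, your orientation covers \emph{all} edges of $\overline{G}$: when $n=p^aq^b$ with $p\neq q$, the connection set of $\overline{G}$ consists of the multiples of $p$ \emph{and} the multiples of $q$, whereas the orientation described in the paper only handles differences that are multiples of $p$ (and the cycles it invokes are not Hamiltonian---each generator $kp$ generates a disjoint union of cycles); so your argument is more complete than the one it replaces. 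Third, your line-graph identification explains cleanly why the two-prime hypothesis is essential, which the paper never addresses. What the paper's approach would buy, if its orientation were repaired, is brevity and uniformity with its Theorem~1, avoiding the structural detour through rook's graphs; your handling of the degenerate case $n=p^a$ (disjoint cliques) and of $\chi(\overline{G})=\omega(\overline{G})=n/p$ is also correct.
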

\begin{proof}
Here, by \cite{KLOT}, and the fact that a graph is perfect if and only if its complement is perfect, we get that $G$ is perfect. In addition, we see that the clique number equals $\frac{n}{p}$, where $p$ is the smallest prime divisor of $n$. The clique is formed by the elements of the form $a-(a+p)-(a+2p)-\ldots-(a+n-p)$, where addition is modulo $n$. This also implies, by perfectness, that $chi(G)=\frac{n}{p}$. Now, we can give a kernel solvable orientation with maximum outdegree $\frac{n}{p}-1$ as follows. We orient all the Hamiltonian cycles generated by  $p, 2p, \ldots, n-p$ an orientation in the anticlockwise sense. Then, we can see that the orientation given is kernel-perfect (actually kernel solvable). The maximum outdegree, which is constant for every vertex is $\frac{n}{p}-1$. therefore, we can conclude that $ch(G)=\frac{n}{p}$, which proves the theorem.
\end{proof}

\section*{Cayley graphs on Dihedral groups}
The dihedral groups, denoted in this paper by $D_{2n}$, are the groups that consist of symmetries of a regular  $n$-gon. The minimal generators of dihedral groups in the following are $\{r,s\}$, where $r$ is a reflection element satisfying $r^2=1$ and $s$ is a rotation element having order $n$ with the usual group defining the relation $(rs)^2=e$. The rotation element generates the cyclic group of order $n$, with elements represented by $\{0,1,2,\ldots,n-1\}$.

As $1$ is always a generating element for the cyclic group $\mathbb{Z}_n$, let us assume $s=1$. Then, we could write the elements $rs^a$ as $ra$. We use this notation in the following theorems.

 We try to determine a result similar to the one for circulant graphs for some Cayley graphs on Dihedral groups. 
\begin{thm}
If $n$ is odd and has at most two odd prime divisors, then the graph $G=C(D_{2n}, S)$ with $S=S_1\cup\{r,r1\}$, where $S_1$ is the set of numbers coprime to $n$, satisfies $ch(G)\le p+1$, where $p$ is the least prime dividing $n$.  
\end{thm}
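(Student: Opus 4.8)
The plan is to mirror the strategy of Theorems 1 and 2: exhibit a kernel-solvable (indeed kernel-perfect) orientation $D$ of $G$ whose maximum outdegree is $p$, and then invoke Lemma 1. First I would pin down the structure of $G=C(D_{2n},S)$. Writing the rotations as $\{0,1,\dots,n-1\}$ and the reflections as $\{r0,r1,\dots,r(n-1)\}$ with $ra=rs^{a}$, a short computation with the relation $s^{a}r=rs^{-a}$ shows that $G$ splits into three pieces: (i) the rotations induce a unitary Cayley graph $X_n$ (two rotations $a,b$ are adjacent iff $\gcd(a-b,n)=1$); (ii) the reflections induce an isomorphic copy of $X_n$; and (iii) the generators $r=rs^{0}$ and $r1=rs^{1}$ create a $2$-regular bipartite ``linking'' graph $B$ between rotations and reflections, with $a\sim rb$ exactly when $a+b\in\{0,1\}\pmod n$. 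Since $n$ is odd, $B$ is a single cycle of length $2n$. From this description I would record that the cliques of $G$ are precisely the cliques of the two $X_n$-copies (of size at most $p$) together with the mixed triangles $\{c,c+1,r(-c)\}$ and $\{c,r(-c),r(1-c)\}$, so that $\omega(G)=p$ (recall $p\ge 3$ since $n$ is odd).

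With the structure in hand, I would build $D$ as follows. Orient each $X_n$-copy exactly as in Theorem 1, so that every rotation clique and every reflection clique acquires an all-absorbing vertex and each vertex has outdegree $p-1$ inside its own copy. Then orient the linking cycle $B$ consistently as a single directed $2n$-cycle, giving every vertex outdegree exactly $1$ along $B$. The total maximum outdegree of $D$ is therefore $(p-1)+1=p$, uniformly over all vertices, which is exactly the budget Lemma 1 needs in order to conclude $ch(G)\le p+1$.

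The crux is to verify that this $D$ is genuinely kernel-solvable, i.e.\ that every clique (and ultimately every induced subgraph) has a kernel. For the pure $X_n$-cliques this is inherited from Theorem 1. The delicate point is the mixed triangles: a triangle such as $\{c,c+1,r(-c)\}$ uses one intra-copy edge (oriented by the Theorem 1 orientation) and two linking edges, and it has an absorbing vertex only if the direction chosen on $B$ is compatible with the direction the $X_n$-orientation assigns to the edge $c\sim c+1$. A direct check shows that the two triangle types impose \emph{opposite} compatibility constraints if the two copies are oriented in the same rotational sense; consequently the copies must be oriented in opposite senses (rotations ``increasing'', reflections ``decreasing''). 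I would then verify that, with this coordinated choice, both families of mixed triangles acquire a sink while the linking cycle still contributes outdegree $1$ at every vertex.

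The main obstacle I expect is twofold. First, to apply Lemma 1 one must know that $G$ is perfect (or else argue directly that $D$ is kernel-perfect); unlike the rotation and reflection copies, whose perfectness comes from \cite{KLOT}, the whole dihedral graph has no off-the-shelf perfectness result, so I would either establish it via the strong perfect graph theorem by ruling out odd holes and odd antiholes using the explicit neighbourhood structure above, or bypass perfectness by showing directly that every induced subgraph of $D$ has a kernel. Second, one must confirm that the single orientation of $B$ together with the two $X_n$-orientations simultaneously gives a kernel to \emph{every} clique; the incompatibility noted above shows that this coordination is the genuinely nontrivial step, and it is precisely where the hypotheses ``$n$ odd'' and ``at most two prime divisors'' (which control both the clique structure and the perfectness of the $X_n$-copies) are really used.
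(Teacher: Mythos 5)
Your decomposition and orientation plan is exactly the paper's own: two copies of the unitary Cayley graph $X_n$ on the rotations and on the reflections, oriented as in Theorem 1 with outdegree $p-1$, plus the linking graph $B$ generated by $\{r,r1\}$ oriented with outdegree $1$, with Lemma 1 applied to the combined digraph $D$. Where you go beyond the paper is in the two verifications you flag, and both concerns are genuine: the paper simply asserts that pasting a kernel-perfect orientation of the two copies onto a kernel-perfect orientation of $B$ ``would give us a kernel perfect orientation of $G$,'' leaving the orientation of $B$ unspecified and never examining the mixed triangles $\{c,c+1,r(-c)\}$ and $\{c,r(-c),r(1-c)\}$, which are precisely the cliques straddling the pieces. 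Your observation that these two families force the two copies to be oriented in opposite rotational senses (relative to a fixed direction of the $2n$-cycle $B$) is correct and is a necessary repair: if both copies are oriented in the same sense, one of the two families becomes cyclically oriented, and a cyclically oriented triangle has no kernel, so the combined orientation would not even be clique-acyclic.

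However, the first branch of your fallback for the remaining obstacle would fail, and this is worth knowing concretely: $G$ is \emph{not} perfect, so no amount of checking odd holes and antiholes will succeed, and Lemma 1 (stated for perfect graphs, resting on Boros--Gurvich) cannot be invoked for $G$ at all. For $n=9$ (a legal instance, $p=3$) the five vertices $0,1,3,r6,r1$ induce a $5$-hole: the cycle edges are present since $\gcd(1,9)=\gcd(2,9)=\gcd(5,9)=1$, $3+6\equiv 0$, and $0+1\equiv 1 \pmod 9$, while all five chords are absent since $3-0=3$ divides $9$ and $0+6$, $1+6$, $1+1$, $3+1$ all avoid $\{0,1\} \bmod 9$. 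Consequently the only viable route is your second one: a direct proof that \emph{every} induced subdigraph of $D$ has a kernel. Clique-acyclicity does not imply this for imperfect graphs, and neither your proposal nor the paper supplies such an argument (the $5$-hole above is not itself fatal to $D$ --- under your coordinated orientation it is not cyclically oriented and one checks that $\{1,r6\}$ is a kernel --- but an argument covering all induced subgraphs is still missing). So your proposal faithfully reproduces the paper's construction and, unlike the paper, locates the real difficulty; as written, though, both arguments are incomplete at the same point, and the paper's appeal to Lemma 1 is actually invalid because $G$ is imperfect.
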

\begin{proof}
We see that the induced subgraph formed by the generating set $S_1$ is the disjoint union of two unitary Cayley graphs on $n$ vertices. By the previous theorem, we know that these graphs are chromatic choosable, with their choice numbers equal to $p$. In addition, these subgraphs can be given a kernel perfect (kernel solvable) orientation with  maximum outdegree equal to $p-1$. Now, when we form the edges generated by the elements $\{r,r1\}$, these edges induced a bipartite graph with partite sets being the vertex sets of the disjoint unitary Cayley graphs of order $n$ induced by the set $S_1$ earlier. Thus, these edges can always be given a kernel perfect orientation, which, together with the kernel perfect orientation of the unitary Cayley graphs induced by $S_1$, would give us a kernel perfect orientation of $G$ with maximum outdegree $p$. Hence, $ch(G)\le p+1$.

\end{proof}
 \begin{cor}
 If $n$ is odd and has at most two odd prime divisors, then the graph $G=C(D_{2n}, S)$ with $S=S_1\cup S_2$, where $S_1$ is the set of numbers coprime to $n$, and $S_2$ is a set of even cardinality from the set $\{1,2,\ldots,n\}-S_1$, satisfies $ch(G)\le p+k$, where $p$ is the least prime dividing $n$, and $k=\frac{\mid S_2\mid}{2}$.
 \end{cor}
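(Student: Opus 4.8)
The plan is to run the proof of the previous theorem with the two reflection generators $\{r,r1\}$ replaced by the larger family coming from $S_2$, and to check that the only effect is to raise the outdegree contributed by the crossing edges from $1$ to $k=|S_2|/2$. Reading $S_2$ as the set of reflection generators $\{ra:a\in S_2\}$ (so that $G$ is connected and truly a dihedral Cayley graph, as in the previous theorem, rather than two disjoint circulants), I would first record, verbatim, that the subgraph induced by $S_1$ is the disjoint union of two copies of the unitary Cayley graph on $n$ vertices, one carried by the rotations and one by the reflections; by the first theorem each copy is perfect and admits a kernel-solvable orientation of constant outdegree $p-1$, and I would fix these.

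Next I would analyse the crossing edges. Since a reflection generator $ra$ sends the rotation $b$ to the reflection $r(a-b)$, it induces a perfect matching between the rotation part and the reflection part; distinct generators give edge-disjoint matchings, so the $2k$ generators of $S_2$ induce a $2k$-regular bipartite graph $B$ joining the two copies. Because every vertex of $B$ has the even degree $2k$, each component of $B$ is Eulerian, so an Eulerian orientation gives every vertex outdegree exactly $k$ (equivalently, split $B$ into $2k$ perfect matchings by K\"onig's theorem, pair them, and orient each pair as directed even cycles). This is precisely where the hypothesis that $|S_2|$ is even enters, and it is what yields the summand $k=|S_2|/2$ rather than $|S_2|$. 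Superimposing the three orientations produces an orientation $D$ of $G$ in which every vertex has outdegree $(p-1)+k$; granting that $D$ is kernel-perfect, Lemma~1 gives $ch(G)\le(p-1+k)+1=p+k$ at once.

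The genuine obstacle, exactly as in the previous theorem, is the last clause: that gluing a balanced orientation of $B$ onto the two kernel-solvable orientations again yields a kernel-perfect orientation. Following the route behind Lemma~1, I would reduce this to showing that $G$ is perfect, for then the Boros--Gurvich theorem of \cite{BOR},\cite{BOR1} makes every clique-acyclic orientation kernel-perfect, and that the chosen $D$ is clique-acyclic. The delicate point is that the matchings of $B$ can combine with an edge of a unitary copy to form a triangle on two rotations $u,w$ and one reflection $v$, which forces $u-w$ to equal a difference of two elements of $S_2$; such triangles must not be oriented cyclically (otherwise clique-acyclicity, hence the Boros--Gurvich hypothesis, fails), and more globally the matchings must not close up an induced odd cycle of length $\ge5$ together with edges of the two copies. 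I expect this to be the crux, and I would use the hypotheses that $n$ is odd, has at most two prime divisors, and that $S_2\subseteq\{1,\dots,n\}-S_1$ precisely here: the first two keep the unitary copies perfect and their clique structure controlled, while the restriction of $S_2$ to the non-coprime residues is what should force the relevant crossing differences $u-w$ to be non-coprime to $n$, excluding the offending triangles and leaving the Eulerian orientation clique-acyclic. Once perfectness and clique-acyclicity are secured, the outdegree bookkeeping above is routine and the bound $ch(G)\le p+k$ follows.
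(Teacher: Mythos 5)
Your proposal reproduces the paper's own argument: the paper's two-line proof likewise reuses the outdegree-$(p-1)$ kernel-perfect orientation of the $S_1$-part from the preceding theorem, gives the $2k$-regular subgraph induced by $S_2$ a balanced orientation of outdegree $k$ (its phrase ``orienting the edges alternately outwards and inwards for every vertex'' is exactly your Eulerian orientation), superimposes the two, and appeals to Lemma~1 to conclude $ch(G)\le (p-1+k)+1=p+k$. The only divergences are that the paper's literal text takes $S_2\subseteq\{1,\dots,n\}-S_1$ to consist of \emph{rotation} generators, so its $S_2$-edges lie inside the two circulant copies (making $G$ two disjoint circulants) rather than forming your crossing bipartite graph $B$ --- a reading difference that leaves the outdegree bookkeeping unchanged --- and that the ``crux'' you rightly isolate (perfectness of $G$ and clique-acyclicity of the superposed orientation, needed before Lemma~1 or Boros--Gurvich can be invoked) is precisely the step the paper passes over with the single word ``immediate,'' so your attempt is, if anything, more careful than the published proof.
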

 \begin{proof}
 The proof is immediate once we use the previous theorem and the fact that the edges induced by the set $S_2$ can be given a kernel perfect orientation with maximum outdegree $k$. This is achieved by orienting the edges alternately outwards and inwards for every vertex.
 \end{proof}
\begin{thm}
If $n$ is odd and has at most two odd prime divisors, then the graph $G=C(D_{2n}, S)$ with $S=S_1\cup\{r,r1\}$, where $S_1$ is the complement of the set of numbers coprime to $n$, satisfies $ch(G)\le \frac{n}{p}+1$, where $p$ is the least prime dividing $n$.  
\end{thm}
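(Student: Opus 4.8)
The plan is to re-run the argument of the preceding theorem almost verbatim, simply replacing the unitary Cayley graph by its complement and the outdegree bound $p-1$ by $\frac{n}{p}-1$. First I would record the block structure. Since every element of $S_1$ is a rotation $s^a$, left translation by such a generator sends rotations to rotations and reflections to reflections; hence the subgraph of $G$ induced by $S_1$ is the disjoint union of two isomorphic copies of the circulant graph $C(\mathbb{Z}_n,S_1)$, one on the rotation vertices $\{0,1,\ldots,n-1\}$ and one on the reflection vertices $\{r0,r1,\ldots,r(n-1)\}$. Because here $S_1$ is the complement (in the nonzero residues) of the coprime set, each copy is precisely the complement of the unitary Cayley graph on $n$ vertices.

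By the second theorem of the previous section, each such copy is perfect with $\chi=ch=\frac{n}{p}$ and admits a kernel solvable (hence kernel perfect) orientation of maximum outdegree $\frac{n}{p}-1$, namely the one obtained by orienting anticlockwise the Hamiltonian cycles generated by $p,2p,\ldots,n-p$. I would fix this orientation simultaneously on both copies. Next, the generators $r$ and $r1$ join the two parts: from $s^a r = r s^{-a}$ one checks that the rotation $a$ is joined to the reflections $r(-a)$ and $r(1-a)$, and symmetrically, so the edges contributed by $\{r,r1\}$ form a $2$-regular bipartite graph $B$ between the two parts, i.e.\ a disjoint union of even cycles. Orienting each of these cycles consistently gives every vertex bipartite-outdegree exactly $1$, so the combined orientation $D$ has maximum outdegree $(\frac{n}{p}-1)+1=\frac{n}{p}$. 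Granting that $D$ is kernel perfect, the opening lemma yields $ch(G)\le \frac{n}{p}+1$, as claimed.

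The main obstacle is the step I just granted: that gluing the two within-part kernel-perfect orientations to a kernel-perfect orientation of $B$ really produces a kernel-perfect orientation of all of $G$, since kernel-perfectness is not preserved by arbitrary combinations. I would argue it directly, the key observation being that every clique of $G$ lies inside a single part. Indeed, a clique meeting both parts would need a reflection adjacent to two rotations $a,b$, forcing $b=a\pm1$; but $1$ is coprime to $n$, so $1\notin S_1$ and $a,a\pm1$ are non-adjacent in the rotation block, a contradiction. Hence $B$ creates no triangles, the orientation $D$ is clique-acyclic on the perfect blocks, and one can then conclude kernel-solvability either through the Boros--Gurvich machinery invoked in the opening lemma or by verifying that $D$ has no odd directed cycle and applying Richardson's theorem to every induced subdigraph. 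Making this global kernel-perfectness rigorous, rather than checking each piece in isolation, is where the real work lies.
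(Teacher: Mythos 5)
Your construction is the same as the paper's: split $G$ into the rotation block and the reflection block, each a copy of the complement of the unitary Cayley graph carrying the outdegree-$(\frac{n}{p}-1)$ orientation from the previous theorem, orient the $\{r,r1\}$-edges so that every vertex gains exactly one out-edge, and invoke the outdegree lemma. (The paper's proof does exactly this and simply \emph{asserts} that the combined orientation is kernel perfect.) You are right that this assertion is the crux, and your observation that no clique on three or more vertices meets both blocks (the two cross-neighbours of any vertex have indices differing by $1$, and $1\notin S_1$) is correct and useful. The problem is that both routes you propose for closing the gap are dead ends.

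Boros--Gurvich is unavailable because $G$ itself is not perfect. Take $n=9$, $p=3$, $S_1=\{3,6\}$: rotation $a$ is joined across the blocks exactly to $r(-a)$ and $r(1-a)$, and the seven vertices $8,\,r1,\,0,\,r0,\,1,\,r8,\,r2$, in this cyclic order, form an induced $7$-cycle (six crossing edges plus the block edge $r8$--$r2$, difference $6\in S_1$; one checks all fourteen remaining pairs are non-adjacent). So $G$ has an odd hole: the absence of crossing cliques is far from perfection, precisely because odd holes do cross. Richardson's theorem is equally unavailable: the block orientation you import orients the cycles generated by $p$ consistently, and these are directed cycles of length $\frac{n}{p}$, which is odd since $n$ is odd; for $n=9$ they are directed triangles such as $0\to3\to6\to0$, which have no kernel at all, so the imported orientation is not even kernel perfect to begin with (a defect already present in the paper's previous theorem). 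Nor does the natural repair help: orienting each clique $\{a,a+p,\dots\}$ transitively (say by increasing representatives) and the crossing $2n$-cycle consistently still produces, for $n=9$, the induced \emph{directed} $7$-cycle $0\to6\to r3\to7\to r2\to8\to r1\to0$, which again has no kernel. So the ``real work'' you deferred is not a routine verification: for every orientation on the table, global kernel perfectness is provably false, and proving the theorem would require either a genuinely cleverer joint orientation or a different method altogether. The paper's one-line assertion shares exactly this gap.
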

\begin{proof}
The proof is very similar to the previous theorem. In this case, the graph induced by the generating set $S_1$ is a disjoint union of two graphs, which are each complement of unitary Cayley graphs on $n$ vertices. Hence, the two disjoint copies can be given a kernel perfect orientation with maximum outdegree $\frac{n}{p}-1$. 

The edges induced by the set $\{r,r1\}$ can be always given a kernel perfect orientation in $G$ such that the maximum outdegree of the orientation of $G$ is $\frac{n}{p}$. Hence, $ch(G)\le\frac{n}{p}+1$.
\end{proof}
\begin{cor}
If $n$ is odd and has at most two odd prime divisors, then the graph $G=C(D_{2n}, S)$ with $S=S_1\cup S_2$, where $S_1$ is the complement of the set of numbers coprime to $n$, and $S_2$ is a subset of even cardinality from the set $\{1,2,\ldots,n\}-S_1$,  satisfies $ch(G)\le \frac{n}{p}+k$, where $p$ is the least prime dividing $n$, and $k=\frac{|S_2|}{2}$.  
\end{cor}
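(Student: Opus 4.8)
The plan is to follow the template of the first corollary, extending the kernel-perfect orientation produced in the proof of the previous theorem along the edges generated by $S_2$. Writing $S=S_1\cup S_2$, I first record the arithmetic of the generating set: since $S_1$ is the set of residues \emph{not} coprime to $n$, the set $S_2\subseteq\{1,\dots,n\}-S_1$ consists of residues coprime to $n$, and being a legitimate symmetric connection set it decomposes into $k=\frac{|S_2|}{2}$ inverse-pairs $\{a,\,n-a\}$. The previous theorem already supplies a kernel-perfect (indeed kernel-solvable) orientation $D_1$ of the layer generated by $S_1$, namely the two disjoint complements of unitary Cayley graphs, with constant outdegree $\frac{n}{p}-1$. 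I would freeze $D_1$ and only decide how to orient the $S_2$-edges.

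The second step is to analyse the subgraph $H$ of $G$ carried by $S_2$. Each element of $S_2$ is a rotation, so right-multiplication by it preserves each coset of the rotation subgroup; hence $H$ is a disjoint union of two copies of the circulant $C(\mathbb{Z}_n,S_2)$, each $|S_2|=2k$-regular. As every component of an even-regular graph is Eulerian, $H$ admits an Eulerian orientation $D_2$ in which every vertex has outdegree exactly $k$ -- this realises the ``alternately outward and inward'' prescription of the first corollary. Superimposing $D_2$ on $D_1$ gives an orientation $D$ of $G$ with constant outdegree $(\frac{n}{p}-1)+k$, so that Lemma~1, once applicable, yields $ch(G)\le(\frac{n}{p}-1)+k+1=\frac{n}{p}+k$.

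The step I expect to be the real obstacle is certifying that the combined orientation $D$ is kernel-perfect, since kernel-perfectness is not preserved under superposition of orientations of edge-disjoint subgraphs in general and cannot simply be asserted. The cleanest route is the criterion already underlying Lemma~1: if $G$ is perfect and $D$ is clique-acyclic (every clique induces a transitive sub-tournament, hence has an absorbing sink), then $D$ is kernel-perfect. This reduces the problem to two verifications. The local one -- that every clique of $G$ still has a sink under $D$ -- should be manageable, because each clique lies on a single Hamiltonian cycle of the $S_1$-layer that $D_1$ orients consistently, and $D_2$ contributes no clique-edge able to destroy that sink. The global one -- that adjoining the coprime-distance edges $S_2$ to the complement-of-unitary structure leaves $G$ perfect -- is the genuinely delicate point, since new edges could in principle create odd holes or antiholes. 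If a direct perfection argument proves awkward, I would instead bypass this criterion and construct a kernel in each induced subdigraph of $D$ explicitly, exploiting the cyclic symmetry of the circulant layers together with the sink structure inherited from $D_1$.
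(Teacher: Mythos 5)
You have reproduced the paper's own argument: the paper likewise freezes the kernel-perfect orientation of the $S_1$-layer from the previous theorem (outdegree $\frac{n}{p}-1$ at every vertex), orients the $S_2$-edges ``alternately outwards and inwards'' (your Eulerian orientation is the precise form of that prescription), asserts that the superposition is kernel-perfect, and invokes Lemma~1. The difference is that you explicitly flag that assertion as the step requiring proof --- and you are right that it is the crux, but your proposal does not close it, and the main route you offer for closing it cannot work, because the hypothesis of Lemma~1 (and of the Boros--Gurvich criterion behind it) fails: the graph $G$ is in general \emph{not} perfect once the $S_2$-edges are added.

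Concretely, take $n=9$, so $p=3$ and $S_1=\{3,6\}$, and take $S_2=\{1,8\}$, so $k=1$. Each rotation coset then induces $C(\mathbb{Z}_9,\{1,3,6,8\})$, and the vertices $0,1,4,5,8$ induce a $5$-hole: the differences $1-0$, $4-1$, $5-4$, $8-5$, $0-8$ all lie in $\{1,3,6,8\}$ modulo $9$, while all five chords have differences $\pm2$ or $\pm4$, which do not. So the ``genuinely delicate point'' you isolate fails outright: odd holes really do appear, $G$ is not perfect, and the clique-acyclicity criterion is unavailable. Worse, the superposition itself is genuinely not kernel-perfect for natural choices of $D_1$ and $D_2$: orienting the $S_1$-edges as $i\to i+3$ (the paper's ``anticlockwise'' orientation; the same problem arises for a transitive orientation of the triangles that sends $1\to 4$ and $5\to 8$) and the $S_2$-cycle as $i\to i+1$ (an Eulerian orientation), the hole above becomes the chordless directed cycle $0\to1\to4\to5\to8\to0$, which has no kernel. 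So kernel-perfectness of $D$ is not merely unproven; without coordinating the two orientations it is false, and your only remaining option --- the explicit construction of kernels in every induced subdigraph --- is left entirely unexecuted and would have to contend with exactly these directed odd holes. (In this example the conclusion $ch(G)\le 4$ does hold, but by degree-choosability, not by the kernel method.) Note that the gap you identified is equally present in the paper's own one-line proof, which simply asserts that the combined orientation is kernel-perfect; you have located a real flaw in the paper, not a routine step you failed to fill.
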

\begin{proof}
The proof is exactly similar to that of the corollary of the previous theorem.
\end{proof}
 
 
\end{document}